\renewcommand{\epsilon}{\eps}
\numberwithin{equation}{section}
\renewcommand{\vec}[1]{\boldsymbol{#1}}
\newcommand\sa[1]{\textcolor{green!60!red}{#1}}
\newtheorem{definition}{Definition}[section]
\newtheorem{claim}[definition]{Claim}
\newtheorem{theorem}[definition]{Theorem}
\newtheorem{lemma}[definition]{Lemma}
\newtheorem{proposition}[definition]{Proposition}
\newtheorem{corollary}[definition]{Corollary}
\newtheorem{fact}[definition]{Fact}
\newtheorem{question}[definition]{Question}
\newcommand\cG{\mathcal{G}}
\newcommand\cT{\mathcal{T}}
\newcommand\cM{\mathcal{M}}
\newcommand\vB{\vec B}
\newcommand{\beq}{\begin{equation}} \newcommand{\eeq}{\end{equation}}
\newcommand\eps{\varepsilon}
\newcommand\Erw{\mathbb{E}}
\newcommand{\Bin}{{\rm Bin}}
\newcommand\bc[1]{\left({#1}\right)}
\newcommand\cbc[1]{\left\{{#1}\right\}}
\newcommand\brk[1]{\left\lbrack{#1}\right\rbrack}
\newcommand\abs[1]{\left|{#1}\right|}
\newcommand{\whp}{a.a.s.}
\newcommand\pr{\mathbb{P}} 
\renewcommand\Pr{\pr} 
\newcommand\Lem{Lemma}
\newcommand\Prop{Proposition}
\newcommand\Thm{Theorem}
\newcommand{\galpha}{\cG_\alpha}
	\newcommand{\Buv}[2]{\vec{B}(#1, #2)}
	\NewDocumentCommand\gnp{gg}{%
		\ensuremath{\mathbb{G} (\IfNoValueTF{#1}{n}{#1}, \IfNoValueTF{#2}{p}{#2})}%
	}
\begin{document}

	\title{Random perturbation of sparse graphs}
	
	\author{Max Hahn-Klimroth, Giulia S. Maesaka, Yannick Mogge, Samuel Mohr, and Olaf Parczyk}
	\thanks{The research on this project was initiated during a workshop in Cuxhaven.
	We would like to thank the Hamburg University of Technology for their support.
	OP was supported by Technische Universit\"at Ilmenau, the Carls Zeiss Foundation, and DFG Grant PA 3513/1-1.
	MHK was supported by Stiftung Polytechnische Gesellschaft.
	SM was supported by DFG Grant 327533333.
	GSM is supported by the European Research Council (Consolidator Grant PEPCo 724903).}
	
	\address{Max Hahn-Klimroth, {\tt hahnklim@math.uni-frankfurt.de}, Goethe University, Mathematics Institute, 10 Robert Mayer St, Frankfurt 60325, Germany.}
	
	\address{Giulia Satiko Maesaka, {\tt giulia.maesaka@uni-hamburg.de}, Universit\"at Hamburg, Fachbereich Mathematik, 55 Bundesstr., Hamburg 20146, Germany.}
	
	\address{Yannick Mogge, {\tt yannick.mogge@tuhh.de}, Hamburg University of Technology, Mathematics Institute, 3 Am	Schwarzenberg-Campus, Hamburg 21073, Germany.}
	
	\address{Samuel Mohr, {\tt samuel.mohr@tu-ilmenau.de}, Ilmenau University of Technology, Mathematics Institute, 25 Weimarer St, Ilmenau 98693, Germany.}

	\address{Olaf Parczyk, {\tt o.parczyk@lse.ac.uk}, London School of Economics, Department of Mathematics, Houghton St, London, WC2A 2AE, UK.}
	
	\begin{abstract}
		In the model of randomly perturbed graphs we consider the union of a deterministic graph $\galpha$ with minimum degree $\alpha n$ and the binomial random graph $\gnp$.
		This model was introduced by Bohman, Frieze, and Martin and for Hamilton cycles their result bridges the gap between Dirac's theorem and the results by Pos\'{a} and Kor\v{s}unov on the threshold in $\gnp$.
		In this note we extend this result in $\galpha \cup \gnp$ to sparser graphs with $\alpha=o(1)$.
		More precisely, for any $\varepsilon>0$ and $\alpha \colon \mathbb{N} \mapsto (0,1)$ we show that a.a.s. $\galpha \cup \gnp{n}{\beta /n}$ is Hamiltonian, where $\beta = -(6 + \eps) \log(\alpha)$.
		If $\alpha>0$ is a fixed constant this gives the aforementioned result by Bohman, Frieze, and Martin and if $\alpha=O(1/n)$ the random part $\gnp$ is sufficient for a Hamilton cycle.
		We also discuss embeddings of bounded degree trees and other spanning structures in this model, which lead to interesting questions on almost spanning embeddings into $\gnp$.
	\end{abstract}
	
	\maketitle
	
	\section{Introduction and results}\label{Sec_intro}
	
	For $\alpha\in(0,1)$ we let $\galpha$ be an $n$-vertex graph with minimum degree $\delta(\galpha) \ge \alpha n$.
	A famous result by Dirac~\cite{Dirac} says that if $\alpha \ge 1/2$ and $n \ge 3$, then $\galpha$ contains a Hamilton cycle, i.e.~a spanning cycle through all vertices of $\galpha$.
	This motivated the more general questions of determining the smallest $\alpha$ such that $\galpha$ contains a given spanning structure.
	For example, there are results for trees~\cite{komlos2001spanning}, factors~\cite{hajnal1970proof}, powers of Hamilton cycles~\cite{komlos1998posa, komlos1998proof}, and general bounded degree graphs~\cite{bottcher2009proof}.
	This is a problem for deterministic graphs that belongs to the area of extremal graph theory.
	
	We can consider similar questions for random graphs, in particular, for the binomial random graph model $\gnp{n}{p}$, which is the probability space over $n$-vertex graphs with each edge being present with probability $p$ independent of all the others.
	Analogous to the smallest $\alpha$ we are looking for a function $\hat{p}=\hat{p}(n) \colon \mathbb{N} \mapsto (0,1)$ such that if $p=\omega(\hat{p})$ the probability that $\gnp{n}{p}$ contains some spanning subgraph tends to $1$ as $n$ tends to infinity and for $p=o(\hat{p})$ it tends to $0$.
	We call this $\hat{p}$ the threshold function  for the respective property (an easy sufficient criteria for its existence can be found in~\cite{bollobas1987threshold}) and if the first/second statement holds we say that $\gnp{n}{p}$ has/does not have this property asymptotically almost surely (a.a.s.). One often says that $\gnp{n}{p}$ undergoes a \textit{phase transition} at $\hat p$.
	For the Hamilton cycle problem Pos\'{a}~\cite{posa1976hamiltonian} and Kor\v{s}unov~\cite{korshunov1976solution} proved independently that $\hat{p}=\log n/n$ gives the threshold.
	Similar as above there was a tremendous amount of research on determining the thresholds for various spanning structures, e.g.~for matchings~\cite{erdHos1966existence}, trees~\cite{krivelevich2010embedding,montgomery2019spanning}, factors~\cite{johansson2008factors}, powers of Hamilton cycles~\cite{kuhn2012posa,nenadov2019powers}, and general bounded degree graphs~\cite{alon1992spanning,ferber2017embedding,ferber2018spanning,riordan2000spanning}.
	An extensive survey by B\"{o}ttcher can be found in~\cite{bottcher2017large}. 
	
	Motivated by the smoothed analysis of algorithms~\cite{spielman2004smoothed}, both these worlds were combined by Bohman, Frieze, and Martin~\cite{BFM03}.
	For any fixed $\alpha >0$, they defined the model of randomly perturbed graphs as the union $\galpha \cup \gnp{n}{p}$.
	They showed that $1/n$ is the threshold for a Hamilton cycle, meaning that there is a graph $\galpha$ such that with $p=o(1/n)$ there a.a.s.~is no Hamilton cycle in $\galpha \cup \gnp{n}{p}$ and for any $\galpha$ and $p=\omega(1/n)$ there a.a.s.~is a Hamilton cycle in $\galpha \cup \gnp{n}{p}$.
	It is important to note that in $\gnp$,  $p = 1/n$ is also the threshold for an almost spanning cycle, this is for any $\varepsilon>0$ a cycle on at least $(1-\varepsilon)n$ vertices.
	It should be further remarked that if $p=o(\log n/n)$ there are a.a.s.~isolated vertices in $\gnp{n}{p}$ and the purpose of $\galpha$ is to compensate for this and to help in turning the almost spanning cycle into a Hamilton cycle.
	
	This first result on randomly perturbed graphs~\cite{BFM03} sparked a lot of subsequent research on the thresholds of spanning structures in this randomly perturbed graphs model, e.g.~trees~\cite{bottcher2019universality,joos2018spanning,krivelevich2017bounded}, factors~\cite{balogh2019tilings}, powers of Hamilton cycles~\cite{bedenknecht2018powers,BMPPM18}, and general bounded degree graphs~\cite{BMPPM18}.
	As for a Hamilton cycle there is often a $\log$-factor difference to the thresholds in $\gnp{n}{p}$ alone, which is there for local reasons similar to isolated vertices.
	In most of these cases a $\galpha$, that is responsible for the lower bound, is the complete imbalanced bipartite graph $K_{\alpha n,(1-\alpha)n}$.
	In this model there are also results with lower bounds on $\alpha$~\cite{bennett2017adding,dudek2018powers,han2019tilings,nenadov2018sprinkling} and for Ramsey-type problems~\cite{das2019vertex,das2019ramsey}.

	\subsection{Hamiltonicity in randomly perturbed sparse graphs}\label{Sec_results}
	
	The aim of this note is to investigate a new direction.
	Instead of fixing an $\alpha \in (0,1)$ in advance we allow $\alpha$ to tend to zero with $n$.
	This extends the range of $\galpha$ to sparse graphs and we want to determine the threshold probability in $\galpha \cup \gnp$.
	For example, with $\alpha=1/\log n$ we have a sparse deterministic graph $\galpha$ with minimum degree $n/\log n$.
	Then $p=\omega(1/n)$ does not suffice in general, but it is sufficient to take $\galpha \cup \gnp{n}{\Theta(\log \log n)/n}$ to \whp~guarantee a Hamilton cycle.
	More generally, we can prove the following.
	
	\begin{theorem}\label{Thm_hamiltonicity}
		Let $\alpha = \alpha(n) : \mathbb{N} \mapsto (0,1)$ and $\beta = \beta(\alpha) = -(6+o(1)) \log(\alpha)$.
		Then a.a.s.~$\galpha \cup \gnp{n}{\beta /n}$ is Hamiltonian.
	\end{theorem}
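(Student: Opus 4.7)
The plan is to apply Pósa's rotation--extension technique combined with the minimum-degree condition on $\galpha$, followed by a sprinkling step. Write $\beta/n = p_1 + p_2$ with $p_1 = (5+\eps/2)\log(1/\alpha)/n$ and $p_2 = (1+\eps/2)\log(1/\alpha)/n$, so that $\gnp{n}{\beta/n}$ is the edge-disjoint union of independent copies $\gnp{n}{p_1} \cup \gnp{n}{p_2}$. Since the case of $\alpha$ bounded away from $0$ is already covered by the theorem of Bohman, Frieze, and Martin, we may assume throughout that $\alpha$ is smaller than a fixed constant.

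First, we show that $H := \galpha \cup \gnp{n}{p_1}$ a.a.s.\ satisfies the \emph{Pósa expansion property}: $|N_H(S) \setminus S| \geq 2|S|$ for every $S \subseteq V$ with $|S| \leq n/4$. For $|S| \leq \alpha n /3$ this is immediate from $\delta(\galpha) \geq \alpha n$, since a single vertex of $S$ already has at least $\alpha n - |S| \geq 2|S|$ external neighbours in $\galpha$. For $\alpha n /3 < |S| \leq n/4$, the random edges in $\gnp{n}{p_1}$ handle the expansion via a standard first-moment bound over \emph{bad} pairs $(S,T)$ with $|S|=s$, $|T|=2s$, $T \cap S = \emptyset$, and no $\gnp{n}{p_1}$-edge between $S$ and $V \setminus (S \cup T)$. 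Their expected number is at most
\[
\binom{n}{s}\binom{n-s}{2s}(1-p_1)^{s(n-3s)} \leq \left(\frac{\eul n}{s}\right)^{3s}\exp\bigl(-p_1 s(n-3s)\bigr),
\]
and with $p_1 \sim 5\log(1/\alpha)/n$ one checks that this is $o(1/n)$ uniformly for $s$ in the relevant range, summing to $o(1)$.

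Next, Pósa's rotation lemma applied to $H$ shows that any longest path $P$ contains $(1-o(1))n$ vertices and that repeatedly rotating from the endpoints of $P$ yields $\Omega(n)$ alternative endpoint choices at each end. The resulting set of \emph{boosters}---pairs $(u,v)$ whose addition as an edge either extends $P$ by one vertex or converts it into a cycle through $V(P)$---has size $\Omega(n^2)$. Sprinkling the independent random edges of $\gnp{n}{p_2}$ on top of $H$, the probability that a fixed extension step finds no booster is at most $(1-p_2)^{\Omega(n^2)} = \exp\bigl(-\Omega(n \log(1/\alpha))\bigr)$, so a standard two-round exposure combined with a union bound over the $O(n)$ extension-and-closing steps needed to reach a Hamilton cycle succeeds a.a.s.

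The main obstacle is the expansion step for sets $S$ with $|S|$ between $\Theta(\alpha n)$ and $\Theta(n)$: the union bound must simultaneously beat the entropy $(\eul n/s)^{3s}$ both at $s \sim \alpha n$ (where $\binom{n}{s}$ is large) and at $s \sim n/4$ (where the ``gap'' factor $n-3s$ is small). The constant $5$ in $p_1$ is essentially the smallest value that accomplishes both, while the extra $\log(1/\alpha)/n$ required for the booster-sprinkling round $\gnp{n}{p_2}$ contributes the remaining $1$, giving the total $\beta = (6+o(1))\log(1/\alpha)$.
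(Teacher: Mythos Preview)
Your route via P\'osa rotation--extension is genuinely different from the paper's, but as written it has a real gap in the sprinkling step, and that gap is not cosmetic.

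The claim that ``any longest path $P$ contains $(1-o(1))n$ vertices'' does not follow from P\'osa's rotation lemma. From $(n/4,2)$-expansion one only gets $|V(P)|\ge 3n/4$: the rotation endpoints $R$ satisfy $|R|>n/4$ and $R\cup N(R)\subseteq V(P)$, but nothing better. Hence you may need $\Theta(n)$ booster rounds, not $o(n)$. Now look at your sprinkling bound. You cannot spend all of $\gnp{n}{p_2}$ at every round; the rounds are not independent. If instead you split $p_2$ into $t=\Theta(n)$ rounds, the per-round failure probability is $(1-p_2/t)^{cn^2}\approx\exp(-cp_2 n)=\alpha^{c}$, and the union bound over $t$ rounds gives $\Theta(n)\,\alpha^{c}$. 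This is $o(1)$ only when $\alpha$ is polynomially small in $n$, i.e.\ precisely the trivial regime where $\beta\gtrsim\log n$ and $\gnp{n}{\beta/n}$ is already Hamiltonian on its own. The interesting range---$\alpha$ a small constant, or $\alpha=1/\mathrm{polylog}(n)$---is exactly where your argument breaks. The ``counting over booster sequences'' variant of the sprinkling argument runs into the same obstruction: the entropy of $\Theta(n)$ booster choices is $\Theta(n\log n)$, which $p_2 n^2=\Theta(n\log(1/\alpha))$ cannot beat unless $\log(1/\alpha)\gtrsim\log n$.

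The paper sidesteps this entirely. It does not rely on expansion of $\galpha\cup\gnp{n}{p_1}$ at all. Instead it invokes Frieze's almost-spanning cycle lemma in $\gnp{n}{(\beta-1)/n}$ to get a path missing only $k\le(1+o(1))\beta e^{1-\beta}n$ vertices, and then absorbs each leftover vertex \emph{deterministically} via an explicit three-edge switching in $\galpha$: for the current path end $u$ and a leftover $v$, the set $\vB(u,v)=\{x\in N_{\galpha}(u)\cap P': N_P(x)\subseteq N_{\galpha}(v)\}$ has size $\ge\alpha^3 n/4$ a.a.s.\ (this is where the exponent $3$, and hence the constant $6$ in $\beta$, arises). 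One needs $k<\alpha^3 n/8$, which is exactly $\beta e^{1-\beta}<\alpha^3/8$, i.e.\ $\beta\ge -(6+o(1))\log\alpha$. Only a single sprinkling round with $\gnp{n}{1/n}$ is used at the very end to close the cycle. So the paper's leftover is $o(\alpha^3 n)$ rather than $\Theta(n)$, and the absorption uses no further randomness---that is the idea your proposal is missing.
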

	
	This extends the result of Bohman, Frieze, and Martin~\cite{BFM03} for constant $\alpha>0$.
	For even $n$ a direct consequence of this theorem is the existence of a perfect matching in the same graph.
	To prove Theorem~\ref{Thm_hamiltonicity} we use a result by Frieze~\cite{Frieze86} to find a very long path in $\gnp$ alone and then use the switching technique developed in~\cite{BMPPM18} to turn this into a Hamilton cycle.
	As it turns out, our method allows to prove the existence of a perfect matching with a slightly lower edge probability. 
	
	\begin{theorem}\label{Thm_PM}
		Let $\alpha = \alpha(n) : \mathbb{N} \mapsto (0,1)$ and $\beta = \beta(\alpha) = -(4+o(1)) \log(\alpha)$.
		Then a.a.s.~$\galpha \cup \gnp{n}{\beta /n}$ contains a perfect matching.
	\end{theorem}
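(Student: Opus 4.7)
We follow the same two-step strategy as for Theorem~\ref{Thm_hamiltonicity}: first extract a large matching $M_0$ from the binomial random graph alone, then absorb the few remaining unmatched vertices via short alternating paths whose non-matching edges come from $\galpha$. The drop of the prefactor from $6$ to $4$ reflects the fact that augmenting a matching between two unmatched vertices requires one switching edge less than closing and rotating a near-Hamilton cycle into a spanning cycle.

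Sprinkle $\gnp{n}{\beta/n}$ as the edge-disjoint union of independent random graphs $\mathbb{G}_1\sim\gnp{n}{p_1}$ and $\mathbb{G}_2\sim\gnp{n}{p_2}$ with $p_1=(1-o(1))\beta/n$ and a small reserve $p_2=o(\beta/n)$. If $\beta\geq(1+\Omega(1))\log n$, then $\mathbb{G}_1$ alone contains a perfect matching a.a.s.\ by the classical \Erdos--\Renyi\ theorem, and we are done. Otherwise $\alpha\geq n^{-1/4+o(1)}$, and a Karp--Sipser-style analysis in $\mathbb{G}_1$ yields a matching $M_0$ whose uncovered set $U$ satisfies $|U|\leq(2+o(1))ne^{-p_1n}\leq n\alpha^{4-o(1)}$ a.a.s.

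Given $M_0$, we iteratively augment to a perfect matching by locating, for each $v\in U$, an alternating path of length three $v\to w\to w'\to u$ with $vw\in E(\galpha)$, $ww'\in M_0$, $w'u\in E(\mathbb{G}_2)$, and $u\in U\setminus\{v\}$; replacing $ww'$ by $\{vw,w'u\}$ in $M_0$ covers both $v$ and $u$. Because $\delta(\galpha)\geq\alpha n$ and $|U|=o(\alpha n)$, the vertex $v$ has at least $\alpha n/2$ matched $\galpha$-neighbors $w$, producing that many distinct candidates $w'=M_0(w)$. Conditional on $M_0$, the $\mathbb{G}_2$-edges from the $w'$'s to $U\setminus\{v\}$ are independent, and the probability that none of them realises is at most $\exp\bigl(-\tfrac12\alpha p_2 n(|U|-1)\bigr)$. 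A union bound over $v\in U$ is then $o(1)$ provided $\alpha p_2 n|U|\gg\log n$, which is secured by the choice of $p_2$.

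The main obstacle is the near-threshold regime $\alpha\approx n^{-1/4}$, where $|U|$ may drop to a constant and the above union bound becomes tight. Here one replaces the length-three alternating path by one of length five, using an extra pair of $\galpha$- and $M_0$-edges; this lifts the count of candidates for the final vertex from $\Theta(\alpha n)$ to $\Theta(\alpha^2 n^2)$ and comfortably restores the union bound. A secondary technicality is to perform the $|U|/2$ augmentations disjointly; this is handled either by splitting $\mathbb{G}_2$ further into $\Theta(|U|)$ independent slices, providing fresh random edges for each round, or by observing that each augmentation disturbs only $O(1)$ vertices and $O(1)$ random edges, so earlier augmentations remove only a negligible fraction of the candidates in later rounds.
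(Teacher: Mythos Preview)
There is a genuine gap in the absorption step. In your length-three augmentation $v\to w\to w'\to u$ with $vw\in E(\galpha)$, $ww'\in M_0$, $w'u\in E(\mathbb G_2)$, the quantity you need to send to infinity is $\alpha\, p_2 n\,|U|$. With $p_2=o(\beta/n)$ and $|U|\le (2+o(1))n\alpha^{4-o(1)}$ this is $o\bigl(\alpha^{5}n\log\tfrac{1}{\alpha}\bigr)$, which already tends to $0$ at $\alpha= n^{-1/5}$. So the argument breaks down not only ``near the threshold $\alpha\approx n^{-1/4}$'' (where $|U|$ happens to be small) but throughout the range $n^{-1/4+o(1)}\le\alpha\le n^{-1/5+o(1)}$, where $|U|$ is polynomial in $n$ and $\mathbb G_1$ alone does not yet contain a perfect matching.

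Your length-five fix does not close this gap. Appending another $\galpha$--$M_0$ step from $v$'s side produces $\Theta(\alpha^2n^2)$ \emph{walks}, but what governs the probability of hitting $U$ via a $\mathbb G_2$-edge is the number of \emph{distinct} penultimate vertices, which is still at most $n$; the bound does not improve. The only way to genuinely gain is to spend a $\galpha$-edge at the \emph{other} endpoint $u$ as well --- and once you do that, the $\mathbb G_2$-edge becomes superfluous.

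This is precisely the paper's route. For an unmatched pair $u,v$ set
\[
\vB(u,v)=\bigl\{\{x,x'\}\in M_0:\ x\in N_{\galpha}(u),\ x'\in N_{\galpha}(v)\bigr\}.
\]
Since $M_0$ is determined by $\gnp{n}{\beta/n}$ independently of $\galpha$, each matching edge lies in $\vB(u,v)$ with probability at least $\alpha^2$, and Chernoff plus a union bound over all pairs gives a.a.s.\ $|\vB(u,v)|\ge \alpha^2 n/4$. Any such edge yields the augmentation $u\,x\,x'\,v$ deterministically, so one absorbs the $|U|\le(1+o(1))e^{-\beta}n\ll\alpha^2 n$ leftover vertices greedily in pairs, with no sprinkling and no boundary case. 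This is exactly the adaptation announced at the end of Section~\ref{Sec_hamil_proof}; the drop from $6$ to $4$ reflects that the matching switch needs two $\galpha$-incidences per pair rather than the three needed to reinsert a vertex into the long path.
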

	
	To see that in both theorems $\beta$ is optimal up to the constant factor, consider $\galpha=K_{\alpha n,(1-\alpha)n}$ and note that there cannot be a perfect matching, if we have more than $\alpha n$ isolated vertices on the $(1-\alpha)n$ side.
	The number of isolated vertices in $\gnp{n}{\beta /n}$ roughly is $n (1-\beta /n)^{n-1} \cong n \exp(-\beta)$, which is larger than $\alpha n$ if $\beta = o(-\log(\alpha))$.
	
	For proving results in the randomly perturbed graphs model good almost spanning results are essential.
	Typically, by almost spanning one means that for any $\eps>0$ we can embed the respective structure on at least $(1-\eps)n$ vertices.
	For paths and cycles in $\gnp{n}{C/n}$ this can, for example, be done using expansion properties and the DFS-algorithm~\cite{krivelevich2016long}.
	These almost spanning results are much easier than the spanning counterpart, because there is always a linear size set of available vertices.
	But for the proof of Theorem~\ref{Thm_hamiltonicity} this is not sufficient, because if $\alpha=o(1)$ we will not be able to take care of a linear sized leftover.
	Instead we exploit that we have $\gnp{n}{\beta/n}$ and use the following result showing that we can find a long cycle consisting of all but sublinearly many vertices.
	
	\begin{lemma}[Frieze~\cite{Frieze86}]
	\label{Lemma_almost_hamiltonian}
		Let $0 < \beta=\beta(n) \leq \log n$. Then $\gnp{n}{\beta/n}$ a.a.s.~contains a cycle of length at least $$\bc{1 - \bc{1-o(1)} \beta \exp\bc{-\beta}}n.$$
	\end{lemma}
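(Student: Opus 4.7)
The natural strategy is to combine a degree-sequence analysis with Pósa's rotation-extension technique.

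First, I would isolate the vertices that cannot lie on any cycle. Let $L$ denote the set of vertices of degree at most one in $G := \gnp{n}{\beta/n}$. Standard Chernoff (or Poisson approximation) bounds show that a.a.s.\ $|L| = (1+o(1))(1+\beta)\eul^{-\beta}n$. For $\beta\to\infty$ the degree-one contribution $\beta\eul^{-\beta}n$ dominates, matching precisely the claimed loss, while for bounded $\beta$ the extra $\eul^{-\beta}n$ is of the same order and fits inside the $(1-o(1))$ factor once one checks that $(1-\beta\eul^{-\beta})n$ lies below the giant component size $(1-\rho(\beta))n$ via the fixed-point equation $\rho = \eul^{-\beta(1-\rho)}$.

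Second, I would build a long path in $G$ using Pósa rotations. Reserve a small fraction of edges through a standard two-round exposure; on the first round's graph, starting from an arbitrary initial path, extend greedily to an unvisited neighbour whenever possible and perform a rotation otherwise. Writing $\mathrm{END}$ for the set of path endpoints reachable by iterated rotations from a fixed other end, the expansion of $\gnp{n}{\beta/n}$ on $V\setminus L$ guarantees a.a.s.\ that once the path is maximal, $|\mathrm{END}|$ is linear in $n$. Iterating this, the final path covers all of $V\setminus L$ except an $o(\beta\eul^{-\beta}n)$ set.

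Third, I would close the path into a cycle. Since rotations at both ends yield $\Theta(n^2)$ candidate endpoint pairs while the reserved edges constitute a fresh $\gnp{n}{p'}$ with $np' = \omega(1)$, a.a.s.\ some rotation-reachable pair $(u,v)$ is joined by a reserved edge; performing the rotations and using this edge closes the path into a cycle on the same vertex set (up to an $o(n)$ correction). Combined with the degree-analysis bound on $|L|$, the resulting cycle has length at least $(1-(1-o(1))\beta\eul^{-\beta})n$.

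The main obstacle is quantitative: ensuring that all the losses accumulated in the rotation-extension step and in the closure step are strictly $o(\beta\eul^{-\beta}n)$, not merely $o(n)$. For $\beta$ close to $\log n$ the graph is a.a.s.\ Hamiltonian and the lemma is trivial; the delicate regime is $\beta\to\infty$ sufficiently slowly, where expansion must be verified carefully enough that the linear-lower-bound on $|\mathrm{END}|$ scales appropriately in $\beta$, and where the sprinkling reserve must be tuned so that the unused edges do not themselves cost more than $o(\beta\eul^{-\beta}n)$ path vertices.
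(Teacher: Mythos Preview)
The paper does not prove this lemma at all: it is quoted as a black box from Frieze~\cite{Frieze86} and used as an input to the main argument, so there is no ``paper's own proof'' to compare against. Your sketch via degree-one removal followed by P\'osa rotation--extension and a sprinkling step is exactly the strategy Frieze employs, so in spirit you are reproducing the cited reference rather than offering an alternative.

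Two remarks on the sketch itself. First, your handling of bounded $\beta$ is shaky: for constant $\beta$ the $(1-o(1))$ factor in the statement cannot absorb an additive $\eul^{-\beta}n$ term, since $(1-o(1))\beta\eul^{-\beta}$ is asymptotically just $\beta\eul^{-\beta}$; the lemma as stated is really only sharp for $\beta\to\infty$ (which is the sole regime the paper uses, cf.\ Proposition~\ref{Prop_hamiltonicity_large}), and for bounded $\beta$ one needs a separate argument comparing with the giant-component and $2$-core sizes. Second, the genuinely hard step---which you correctly flag as the obstacle---is controlling the $o(\beta\eul^{-\beta}n)$ error uniformly over the full range $\omega(1)\le\beta\le\log n$; this requires the careful expansion estimates in Frieze's paper and is not something one can wave through with ``standard'' P\'osa machinery.
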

	
	This is optimal, because this is asymptotically the size of the $2$-core (maximal subgraph with minimum degree $2$) of $\gnp$~\cite[Lemma~2.16]{frieze2016introduction}.
	A similar result holds for large matchings.
	
	\begin{lemma}[Frieze~\cite{Frieze86}]
	\label{Lemma_almost_pm}
		Let $0 < \beta=\beta(n) \leq \log n$. Then $\gnp{n}{\beta/n}$ a.a.s.~contains a matching consisting of at least $\bc{1 - \bc{1-o(1)} \exp\bc{-\beta}}n$ vertices.
	\end{lemma}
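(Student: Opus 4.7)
The plan is to bootstrap Lemma~\ref{Lemma_almost_hamiltonian} by a brief augmentation step. Applying that lemma to $G := \gnp{n}{\beta/n}$ yields a cycle $C$ of length $\ell \geq (1-(1+o(1))\beta e^{-\beta})n$, and selecting every second edge along $C$ produces a matching $M_0$ that saturates all but at most one vertex of $C$. Let $U = V(G) \setminus V(M_0)$, so that $|U| \leq (1+o(1))\beta e^{-\beta}n$. The target bound permits a deficiency essentially equal to the number of isolated vertices, so I still need to incorporate about $(\beta-1+o(1))e^{-\beta}n$ further vertices of $U$ into the matching.

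Standard first moment calculations on $\gnp{n}{\beta/n}$ show that a.a.s.\ the number of isolated vertices equals $(1+o(1))ne^{-\beta}$, and for each fixed $d\geq 1$ the number of vertices of degree exactly $d$ equals $(1+o(1))n\beta^d e^{-\beta}/d!$; moreover $U$ lies essentially outside the $2$-core, on tree-components or pendant paths hanging off $C$. For each non-isolated $v \in U$ I would pick a neighbor $u$ in $G$: if $u \in U$ then add $\{u,v\}$ directly to the matching, otherwise $u$ is matched in $M_0$ by an edge $\{u,u'\}$, and I would perform a rotation by deleting $\{u,u'\}$ and adding $\{u,v\}$. The former mate $u'$ lies on $C$ and therefore has at least two neighbors in $G$, allowing the rotation to continue or to close off within a bounded number of steps. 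A union bound together with a short second moment estimate should control the number of collisions (vertices of $U$ competing for the same partner) so that only $o(e^{-\beta}n)$ vertices are lost to conflicts.

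The main obstacle will be making the augmentation uniform across the full range $\beta \in (0,\log n]$: when $\beta$ is close to $\log n$ the target deficiency $e^{-\beta}n$ becomes sub-polynomial and a single mis-rotation ruins the bound, so one must either invoke or mirror the classical hitting time result of Erd\H{o}s--R\'enyi and Bollob\'as that $\gnp$ contains a perfect matching as soon as its minimum degree reaches~$1$. A cleaner unified route avoids rotations entirely via Tutte--Berge: assume for contradiction that the matching number of $G$ is less than $\tfrac12\bigl(1-(1+\eps)e^{-\beta}\bigr)n$ for some $\eps>0$; this forces a set $S \subseteq V(G)$ whose deletion leaves at least $|S| + \eps e^{-\beta}n$ odd components, and a direct first moment estimate over such configurations, using the known component structure of $\gnp{n}{\beta/n}$, makes the expected number of such witnesses vanish uniformly in $\beta$.
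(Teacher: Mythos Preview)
The paper does not prove this lemma at all: it is quoted verbatim as a result of Frieze~\cite{Frieze86} and used as a black box. So there is no in-paper argument to compare against, and any self-contained proof you give is automatically ``different'' from what the paper does.

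As for the proposal itself, it is a sketch of two possible routes rather than a proof, and neither route is carried through. Starting from Lemma~\ref{Lemma_almost_hamiltonian} and taking alternating edges of the long cycle only gives a matching missing $(1+o(1))\beta e^{-\beta}n$ vertices, which is off from the target by a factor of $\beta$; closing that gap is the entire content of the lemma. Your rotation/augmentation paragraph does not establish termination or control the deficit: after swapping $\{u,u'\}$ for $\{u,v\}$ the vertex $u'$ is now unmatched, and ``continue or close off within a bounded number of steps'' is exactly the hard part --- you need an augmenting-path argument that a.a.s.\ succeeds for all but $(1+o(1))e^{-\beta}n$ vertices, uniformly in $\beta$, and nothing here shows that. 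The collision/second-moment remark is likewise an assertion, not an estimate.

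The Tutte--Berge route you sketch at the end is much closer in spirit to how such results (including Frieze's) are actually proved, but again you have only stated the plan: one must bound, for each $s$, the probability that some $S$ of size $s$ leaves more than $s + \eps e^{-\beta}n$ odd components in $G - S$, and sum over $s$. That computation is genuine work --- it requires controlling the component structure of $\gnp{n}{\beta/n}$ across the whole range $0 < \beta \le \log n$ --- and you have not done any of it. If you want to give a proof rather than cite Frieze, this is the calculation you need to write out.
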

	
	Again this is optimal, because the number of isolated vertices is a.a.s.~$(1+o(1))e^{-\beta}n$~\cite[Theorem~3.1]{frieze2016introduction}.
	Observe, that also a bipartite variant of this lemma holds, which can be proved by removing small degree vertices and employing Halls theorem.
	
	\begin{lemma}
	    \label{Lemma_almost_pm_bip}
	    Let $0 < \beta=\beta(n) \leq \log n$. Then the bipartite binomial random graph $\gnp{n,n}{\beta/n}$ a.a.s.~contains a matching consisting of at least $\bc{1 - \bc{1-o(1)} \exp\bc{-\beta}}n$ edges.
	\end{lemma}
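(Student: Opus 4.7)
The plan is to follow the hint: delete the isolated vertices on each side of $H := \gnp{n,n}{\beta/n}$ and apply the defect form of Hall's marriage theorem to the remainder. Set $\lambda := \exp(-\beta)$ and fix $\eps > 0$. Let $L_A \subseteq A$ and $L_B \subseteq B$ be the sets of isolated vertices of $H$. A first-moment computation gives $\Erw|L_A| = \Erw|L_B| = n(1-p)^n = (1+o(1))\lambda n$, and a second-moment argument (replaced by Poisson approximation in the regime $\lambda n = O(1)$) yields $|L_A|, |L_B| \leq (1+\eps)\lambda n$ a.a.s. Since $L_A \cup L_B$ is incident to no edges of $H$, the induced subgraph $H' := H[A \setminus L_A, B \setminus L_B]$ has $N_{H'}(v) = N_H(v)$ for each remaining vertex.

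By the defect form of Hall's marriage theorem, $H'$ admits a matching saturating all but $\eps\lambda n$ vertices of $A \setminus L_A$ whenever every $S \subseteq A \setminus L_A$ satisfies $|N_H(S)| \geq |S| - \eps\lambda n$; combined with the bound on $|L_A|$, this produces a matching of $H$ of size at least $n - (1+2\eps)\lambda n$, which is the claimed bound as $\eps > 0$ is arbitrary. Negating and setting $T := B \setminus N_H(S)$, the bad event is the existence of a pair $(S,T) \in 2^A \times 2^B$ with $S \cap L_A = \emptyset$, $|S|+|T| > n + \eps\lambda n$, and no $H$-edges between $S$ and $T$. A first-moment union bound that incorporates the non-isolation of $S$ yields
\[
\Pr[\text{bad event}] \leq \sum_{s+t > n+\eps\lambda n}\binom{n}{s}\binom{n}{t}(1-p)^{st}\bigl(1-(1-p)^{n-t}\bigr)^s,
\]
where the last factor enforces that every $v \in S$ has at least one neighbor outside $T$.

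The main task, and the principal obstacle, is to show this sum is $o(1)$. A case split by whether $\min(s,t)$ is small ($\leq \eps\lambda n/2$), intermediate ($\leq n/2$), or large ($>n/2$), together with the elementary bounds $\binom{n}{j} \leq (en/j)^j$ and $(1-p)^n = (1+o(1))\lambda$, dispatches the bulk of the regimes via one-variable tail estimates. The delicate ``corner'' case, where $s$ is close to $n$ and $t$ is of order $\lambda n$, is the hardest: here one must exploit both the restriction $s \leq |A \setminus L_A| \leq n - (1-\eps)\lambda n$ a.a.s.\ (ruling out the truly extremal configuration $s = n$) and the non-isolation factor $(1-(1-p)^{n-t})^s \approx (1-\lambda)^s$ to produce an $e^{-\lambda s}$-type correction that absorbs the exponential growth of $\binom{n}{s}\binom{n}{t}$ in this regime. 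Combining the estimates and letting $\eps \downarrow 0$ completes the argument.
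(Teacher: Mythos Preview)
The paper does not actually prove this lemma; it only records the one-line hint ``remove small degree vertices and employ Hall's theorem''. Your plan is in the same spirit, and the reduction to a defect-Hall statement together with the first-moment union bound over pairs $(S,T)$ is the natural thing to try. However, the corner case you single out is a genuine gap, and the two fixes you propose (restricting $s\le n-(1-\eps)\lambda n$ and retaining the non-isolation factor) do \emph{not} close it.

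Concretely, take $s=n-\lambda n$ and $t=(1+\eps')\lambda n$ with $\eps'>\eps$, and write $m=\lambda n$. Then $s$ satisfies your restriction, and a direct computation of your summand gives, using $\binom{n}{k}\le(en/k)^k$, $(1-p)^n\approx\lambda$, and $(1-(1-p)^{n-t})^s\approx(1-\lambda)^{n-m}\approx e^{-m}$,
\[
\binom{n}{s}\binom{n}{t}(1-p)^{st}\bigl(1-(1-p)^{n-t}\bigr)^s
\;\approx\; \exp\!\bigl(m\,[\beta+(1+\eps')(1-\log(1+\eps'))]\bigr)
\;\approx\; e^{(\beta+1)m},
\]
which tends to infinity. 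So the union bound is simply too crude here: the number of admissible $S$ with $|A\setminus S|=m$ is still of order $\binom{n}{m}\approx e^{(1+\beta)m}$, and neither the no-edge factor $(1-p)^{st}\approx e^{-(1+\eps')\beta m}$ nor the non-isolation correction $e^{-m}$ can absorb the $e^{\beta m}$ coming from $\binom{n}{m}$.

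What does work in this regime is to parameterise not by $(S,T)$ but by the small ``excess'' sets $A_0=(A\setminus S)\setminus L_A$ and $B_0=T\setminus L_B$, each of size $O(\eps m)$ on the good event $|L_A|,|L_B|\in[(1-\eps)m,(1+\eps)m]$. The key structural observation is that every vertex of $B_0$ is non-isolated yet has all of its neighbours inside $A_0$ (since there are no $S$--$T$ edges and $L_A$ has no edges at all). This yields a factor of order $(p|A_0|)^{|B_0|}\le(\beta\eps\lambda)^{|B_0|}$, and the binomial coefficients are now only $\binom{n}{|A_0|}\binom{n}{|B_0|}$; together these give an exponent of order $-\beta\eps' m$, which suffices. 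Alternatively, removing vertices of degree below a small threshold (rather than only isolated vertices), as the paper's hint suggests, provides the same extra leverage.
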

	
	\subsection{Bounded degree trees in randomly perturbed sparse graphs}
	
	After Hamilton cycles and perfect matchings, the next natural candidates are $n$-vertex trees with maximum degree bounded by a constant~$\Delta$.
	In $\gnp$ the threshold $\log n/n$ was determined in a breakthrough result by Montgomery~\cite{montgomery2019spanning}, in $\galpha$ it is enough to have a fixed $\alpha>1/2$~\cite{komlos1995proof}, and in $\galpha \cup \gnp$ with constant $\alpha>0$ the threshold is $1/n$~\cite{krivelevich2017bounded}.
	To obtain a result similar to Theorem~\ref{Thm_hamiltonicity} for bounded degree trees using our approach we need an almost spanning result similar to Lemma~\ref{Lemma_almost_hamiltonian}.
	With a similar approach as for Theorem~\ref{Thm_hamiltonicity} and~\ref{Thm_PM} we obtain the following modular statement.
	
	\begin{theorem}\label{Thm_trees}
    Let $\Delta \ge 2$ be an integer and suppose that $\alpha,\beta,\eps \colon \mathbb{N} \mapsto [0,1]$ are such that $4 (\Delta+1) \eps < \alpha^{\Delta + 1}$ and a.a.s.~$\gnp{n}{\beta/n}$ contains a given tree with maximum degree $\Delta$ on $(1 - \eps)n$ vertices.
    Then any tree with maximum degree $\Delta$ on $n$ vertices is a.a.s.~contained in the union $\galpha \cup \gnp{n}{\beta/n}$.
    \end{theorem}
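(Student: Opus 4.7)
The plan is to follow the same two-phase blueprint as in Theorems~\ref{Thm_hamiltonicity} and~\ref{Thm_PM}: use the given hypothesis on $\gnp{n}{\beta/n}$ to embed an almost-spanning subtree $T'$ of $T$ and then exploit the minimum-degree condition on $\galpha$ to absorb the remaining $\eps n$ vertices of $T$ into the embedding.

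For the preprocessing step, I would apply a standard dichotomy for bounded-degree trees: either $T$ contains at least $\eps n$ leaves, or $T$ contains many vertex-disjoint \emph{bare paths}, i.e.\ subpaths whose internal vertices have degree two in $T$. In either case one can carve out a set $L \subseteq V(T)$ of exactly $\eps n$ vertices so that $T' := T - L$ is a subtree on $(1-\eps)n$ vertices and each connected piece of $L$ is pendant to $T'$ of depth at most $\Delta+1$, with every $\ell \in L$ having a well-defined parent $p(\ell)$ lying either in $T'$ or in a shallower layer of $L$. Since by hypothesis $\gnp{n}{\beta/n}$ a.a.s.\ contains $T'$, I fix such an embedding $\phi : V(T') \to V$ and set $R := V \setminus \phi(V(T'))$ of size $\eps n$.

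The heart of the argument is then to extend $\phi$ to $T$ by embedding $L$ into $R$ using edges of $\galpha$. I would process $L$ in increasing order of depth and at each layer find a perfect matching in the bipartite graph whose edges are the pairs $(\ell, v)$ with $\{\phi(p(\ell)),v\} \in E(\galpha)$ and $v$ still unused inside $R$. The main obstacle is that $\galpha$ carries only a global minimum-degree guarantee $\delta(\galpha) \geq \alpha n$, so the $\galpha$-neighborhood of a parent image $\phi(p(\ell))$ could miss $R$ entirely and a naive Hall condition on the matching bipartite graph may fail.

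To overcome this I would use a switching argument in the spirit of~\cite{BMPPM18}: whenever a set $S$ in some layer violates Hall's condition, swap a vertex $w \in V(T')$ whose image $\phi(w)$ is a common $\galpha$-neighbor of many parents in $p(S)$ with some $v \in R$ adjacent to $\phi(w)$ in $\galpha$, thereby re-embedding a bounded piece of $T'$ at $v$ and freeing $\phi(w)$ as a new landing pad for the blocked parents. Iterating this swap up to $\Delta+1$ times realizes an effective expansion factor of $\alpha^{\Delta+1} n$ into $R$, and the quantitative hypothesis $4(\Delta+1)\eps < \alpha^{\Delta+1}$ is exactly the balance needed for this expansion to exceed the deficit $|S| \leq \eps n$ at every step. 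Restoring Hall's condition in each layer completes the extension of $\phi$ to $V(T)$ and yields the desired embedding of $T$ into $\galpha \cup \gnp{n}{\beta/n}$.
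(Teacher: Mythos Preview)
Your two-phase blueprint (embed an almost-spanning subtree, then absorb the leftover via switching in $\galpha$) is exactly the paper's strategy, but the way you carry out both phases diverges from the paper and introduces gaps.

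\textbf{Preprocessing.} The leaves/bare-paths dichotomy is unnecessary here. A tree on more than one vertex always has a leaf, so the paper simply peels off leaves one at a time until $\lfloor (1-\eps)n\rfloor$ vertices remain; the resulting $\cT_\eps$ is automatically a subtree and every vertex of $L$ is, at the moment it is to be re-attached, a leaf whose unique parent is already embedded. Your claim that the connected pieces of $L$ have ``depth at most $\Delta+1$'' does not follow from the dichotomy and is never used.

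\textbf{The switching step.} This is where your proposal departs substantively and loses the argument. In the paper one first fixes, inside $\cT_\eps$, an independent set $T$ of size at least $\frac{(1-\Delta\eps)n}{\Delta+1}$ whose vertices have \emph{no neighbours in $L$}. Let $T'$ be its image under the random embedding. For a leaf $w\in L$ with parent embedded at $u$ and an arbitrary unused vertex $v$, one looks for some $x\in T'$ with $x\in N_{\galpha}(u)$ and $N_{\cT_\eps'}(x)\subseteq N_{\galpha}(v)$; then the vertex currently at $x$ is re-embedded at $v$ (valid because $v$ is $\galpha$-adjacent to all $\le\Delta$ tree-neighbours of $x$, and the choice of $T$ guarantees none of these neighbours were themselves moved or lie in $L$), and $w$ is embedded at $x$. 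This is a \emph{single} swap requiring $\Delta+1$ specific $\galpha$-adjacencies, and the independence of $\gnp{n}{\beta/n}$ from $\galpha$ makes $T'$ effectively a random subset, giving $|\vB(u,v)|\ge \tfrac{\alpha^{\Delta+1}n}{4(\Delta+1)}$ a.a.s.\ for all pairs $u,v$. Greedy absorption then succeeds because $|L|\le \eps n+1 < |\vB(u,v)|$, which is precisely the hypothesis $4(\Delta+1)\eps<\alpha^{\Delta+1}$.

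Your mechanism instead asks for ``$v\in R$ adjacent to $\phi(w)$ in $\galpha$'', which does not allow $w$ to be re-embedded at $v$: you need $v$ adjacent to the images of $w$'s tree-neighbours, not to $\phi(w)$ itself. Consequently the sentence ``iterating this swap up to $\Delta+1$ times realizes an effective expansion factor of $\alpha^{\Delta+1}n$'' misidentifies the source of the exponent; it comes from one swap demanding $\Delta+1$ adjacencies, not from $\Delta+1$ rounds of swapping. The Hall-plus-repair scheme you sketch is also unnecessary once the set $T$ is isolated, since greedy already works; and without that set your scheme has no control over whether a swapped vertex later needs a neighbour in $L$ or is itself swapped again.
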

	
	Next we discuss the almost spanning results that we can obtain in the relevant regime.
	Improving on a result of Alon, Krivelevich, and Sudakov~\cite{alon2007embedding}, Balogh, Csaba, Pei, and Samotij~\cite{balogh2010large} proved that for $\Delta \ge 2$ there exists a $C>0$  such that for $\eps>0$ a.a.s.~$\gnp{n}{\beta/n}$ contains any tree with maximum degree $\Delta$ on at most $(1-\eps)n$ vertices provided that $\beta \ge \tfrac{C}{\eps} \log \tfrac{1}{\eps} $.
	For the proof they only require that the graph satisfies certain expander properties.
	This can be extended to the range where $\eps \to 0$ and $\omega(1)=\beta \le \log n$ and following along the lines of their argument we get the following.
    
	\begin{lemma}
	    \label{lem:almost_tree}
	    For $\Delta \ge 2$ there exists a $C>0$ such that for any $0 < \beta=\beta(n) \leq \log n$ and $\eps=\eps(n)>0$ with $\beta \ge \tfrac{C}{\eps} \log \tfrac{1}{\eps}$ the following holds.
	    $\gnp{n}{\beta/n}$ a.a.s.~contains any bounded degree tree on at most $\bc{1 - \eps}n$ vertices.
	\end{lemma}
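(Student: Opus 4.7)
The plan is to follow the strategy of Balogh, Csaba, Pei, and Samotij~\cite{balogh2010large}, which factors the argument into a probabilistic step that establishes certain expansion properties of $\gnp{n}{\beta/n}$ and a deterministic embedding step that uses these properties to accommodate any bounded degree tree on $(1-\eps)n$ vertices. Compared to the cited paper, the only novelty is that $\eps = \eps(n)$ is allowed to tend to zero and $\beta = \beta(n)$ may grow up to $\log n$; the skeleton of the argument is unchanged, and it is essentially the probabilistic estimates that must be revisited to make sure they remain valid (and not wasteful) throughout this full range.

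For the probabilistic step, I would show by standard first-moment and Chernoff bounds combined with the union bound that, provided $\beta \ge (C/\eps)\log(1/\eps)$ for a sufficiently large constant $C = C(\Delta)$, the graph $G = \gnp{n}{\beta/n}$ a.a.s.\ admits a subgraph $H$ on at least $(1-\eps/2)n$ vertices with minimum degree at least $\beta/2$ and with the following two expansion properties: (E1) every $S \subseteq V(H)$ with $|S| \le n/\beta$ satisfies $|N_H(S)| \ge (\Delta+1)|S|$, and (E2) every $S \subseteq V(H)$ with $n/\beta < |S| \le (1-\eps)n/(\Delta+2)$ satisfies $|N_H(S) \setminus S| \ge (\Delta+1)|S|$. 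The lower bound on $\beta$ enters both in controlling the number of low-degree vertices to be discarded when passing from $G$ to $H$ and in ensuring that the union bound over sets $S$ in (E2) converges; the factor $\log(1/\eps)/\eps$ is precisely what is needed for the latter at scales close to $(1-\eps)n/(\Delta+2)$.

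For the deterministic step, I would invoke Haxell's extension of the Friedman--Pippenger theorem: in any graph satisfying expansion of type (E1)--(E2) at scale $2m-2$ one can embed every tree on at most $m$ vertices with maximum degree $\Delta$. Applying this off the shelf yields embeddings up to roughly $n/(2\Delta+4)$ vertices; to reach size $(1-\eps)n$ one decomposes the tree into a ``skeleton'' formed by long bare paths together with a bounded collection of small pending bushes, embeds the skeleton first using a bipartite matching argument (an $H$-internal variant of Lemma~\ref{Lemma_almost_pm_bip}), and then absorbs the bushes sequentially, each time applying Haxell's theorem to the residual graph. The main obstacle is precisely this last step: at scale $|S| > n/(\Delta+2)$ the expansion $|N(S)\setminus S| \ge (\Delta+1)|S|$ is vacuously false, so Haxell's theorem does not cover the full range, and the skeleton-plus-bushes decomposition has to be carried out carefully so that the skeleton can be embedded by matching expansion alone while maintaining enough slack in the complement of the partial embedding; tracking the parameters to verify that a constant $C = C(\Delta)$ suffices in the hypothesis $\beta \ge (C/\eps)\log(1/\eps)$ is the delicate part of the argument.
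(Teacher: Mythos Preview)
Your proposal matches the paper's approach: the paper does not give a detailed proof of this lemma but simply remarks that the argument of Balogh, Csaba, Pei, and Samotij~\cite{balogh2010large}---which reduces the embedding problem to verifying certain expander properties of the host graph---extends to the regime $\eps \to 0$, $\beta \le \log n$. Your sketch fleshes this out further than the paper does; the only caveat is that the expander hypotheses actually used in~\cite{balogh2010large} differ from your (E1)--(E2): in place of strong expansion for medium-sized sets they require merely an edge between any two disjoint sets of size $\ge n/(2d)$, and their embedding relies on a roll-back variant of Friedman--Pippenger rather than the skeleton-plus-bushes decomposition you outline. These are implementation choices within the same overall strategy, and either route can be made to work, but if you intend to literally ``follow along the lines of their argument'' you should use their conditions, which are both weaker (hence easier to verify in $\gnp{n}{\beta/n}$) and already calibrated to yield the $\tfrac{C}{\eps}\log\tfrac{1}{\eps}$ bound.
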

	
	Then together with Theorem~\ref{Thm_trees} we obtain the following.
	
	\begin{corollary}
	\label{cor:trees}
	For $\Delta \ge 2$ there exists a $C>0$ such that for $\alpha = \alpha(n) : \mathbb{N} \mapsto (0,1)$ and $\beta = \beta(\alpha) = C \alpha^{-(\Delta+1)} \log \tfrac{1}{\alpha}$ the following holds.
	Any $n$-vertex tree $T$ with maximum degree $\Delta$ is a.a.s.~contained in $\galpha \cup \gnp{n}{\beta /n}$.
	\end{corollary}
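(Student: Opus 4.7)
The plan is to compose the almost-spanning embedding result of Lemma~\ref{lem:almost_tree} with the extension step supplied by Theorem~\ref{Thm_trees}. The only real work is to choose $\eps$ so that both hypotheses are simultaneously satisfied.

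First I would set $\eps := \alpha^{\Delta+1}/(8(\Delta+1))$, so that $4(\Delta+1)\eps = \alpha^{\Delta+1}/2 < \alpha^{\Delta+1}$, which meets the leftover condition of Theorem~\ref{Thm_trees}. Next, I would verify that, for $C$ chosen large enough in terms of $\Delta$ and the constant $C_0$ coming from Lemma~\ref{lem:almost_tree}, the value $\beta = C\alpha^{-(\Delta+1)}\log(1/\alpha)$ exceeds the threshold $(C_0/\eps)\log(1/\eps)$ required by that lemma. Indeed, substituting the choice of $\eps$ yields
\[
\frac{C_0}{\eps}\log\frac{1}{\eps} = \frac{8C_0(\Delta+1)}{\alpha^{\Delta+1}}\bigl[(\Delta+1)\log(1/\alpha) + \log(8(\Delta+1))\bigr],
\]
which, for $\alpha$ bounded away from $1$, is at most $16C_0(\Delta+1)^2 \alpha^{-(\Delta+1)}\log(1/\alpha)$. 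Hence $C := 16C_0(\Delta+1)^2$ (or any larger constant absorbing the behaviour near $\alpha=1$) suffices.

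With this choice of $\eps$ and $\beta$, and provided $\beta \le \log n$, Lemma~\ref{lem:almost_tree} a.a.s.\ embeds any tree of maximum degree $\Delta$ on $(1-\eps)n$ vertices into $\gnp{n}{\beta/n}$, which is exactly the input required by Theorem~\ref{Thm_trees}; the theorem then upgrades this almost-spanning embedding to a spanning one in $\galpha \cup \gnp{n}{\beta/n}$. The degenerate case $\beta > \log n$ (which arises only when $\alpha$ is very small) is handled separately by monotonicity: $\gnp{n}{\beta/n}$ stochastically dominates $\gnp{n}{\log n/n}$, and by Montgomery's theorem~\cite{montgomery2019spanning} the latter a.a.s.\ already contains every $n$-vertex tree of maximum degree $\Delta$, so the perturbation $\galpha$ is not needed at all in this regime.

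The main obstacle is purely bookkeeping: balancing the dependencies between $\alpha$, $\eps$, and $\beta$ so that a single constant $C=C(\Delta)$ works uniformly over all $\alpha \in (0,1)$, and checking that the logarithmic factor arising from $\log(1/\eps)$ contributes only the expected $(\Delta+1)$-fold blow-up. Once the scaling $\eps \asymp \alpha^{\Delta+1}$ is fixed, the calculation above is immediate and no conceptually new argument beyond combining the two existing results is required.
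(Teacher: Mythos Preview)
Your proposal is correct and follows exactly the approach the paper intends: the paper derives Corollary~\ref{cor:trees} in a single sentence (``Then together with Theorem~\ref{Thm_trees} we obtain the following''), and you have supplied the routine bookkeeping---choosing $\eps \asymp \alpha^{\Delta+1}$ so that both the hypothesis $4(\Delta+1)\eps < \alpha^{\Delta+1}$ of Theorem~\ref{Thm_trees} and the bound $\beta \ge (C_0/\eps)\log(1/\eps)$ of Lemma~\ref{lem:almost_tree} hold---together with the case split at $\beta \approx \log n$ handled via Montgomery's theorem, mirroring what the paper does explicitly for Hamilton cycles in the proof of Theorem~\ref{Thm_hamiltonicity}.
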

	
	The proof for the dense case in~\cite{krivelevich2017bounded} uses regularity and it is unlikely to give anything better in the sparse regime.
	As remarked in~\cite{alon2007embedding} the condition on the almost spanning embedding in $\gnp{n}{\beta/n}$ could possibly be improved to $\beta > \log \tfrac{C}{\eps}$, then covering almost all non-isolated vertices.
	More precisely this asks for the following.
	
	\begin{question}
	    \label{que:tree}
		For every integer $\Delta$ there exists $C>0$ such that with $0<\beta=\beta(n)\le \log n$ the following holds.
		Is any given tree with maximum degree $\Delta$ on $$(1-C \exp(-\beta))n$$ vertices a.a.s.~contained in $\gnp{n}{\beta/n}$?
	\end{question}
	
	With Theorem~\ref{Thm_trees} this would then give that already $\beta=-(\Delta+1)\log (C \alpha)$ suffices, which would be optimal up to the constant factors.
	We want to briefly argue why it is possible to answer this question for large families of trees and what the difficulties are.
	For simplicity we only discuss the case $\beta=\log \log n$ and note that by Lemma~\ref{lem:almost_tree} above we can embed trees on roughly $(1-1/\log \log n)n$ vertices.
	A very helpful result for handling trees by Krivelevich~\cite{krivelevich2010embedding} states that for any integer $n,k>2$, a tree on $n$ vertices either has at least $n/4k$ leaves or a collection of at least $n/4k$ bare paths (internal vertices of the path have degree $2$ in the tree) of length $k$.
	If there are at least $n /(4\log \log n)$ leaves, we can embed the tree obtained after removing the leaves.
	Then we can use a fresh random graph and Lemma~\ref{Lemma_almost_pm_bip} to find a matching for all the leaves, completing the embedding of the tree.
	
	On the other hand, if there are at least $n \log \log n/(4 \log n)$ bare paths of length $\log n/\log \log n$, it is possible to embed all but $n /\log n$ of these paths, which are all but $n /\log \log n$ vertices.
	Then one has to connect the remaining paths, again using ideas from~\cite{montgomery2019spanning}.
	In between both cases it is not clear what should be done, because we might have $n / \log n$ leaves and $n/(4\log \log n)$ bare paths of length $\log \log n$.
	The length of the paths are too short to connect them and the leaves are too few for the above argument.
    Answering this questions and thereby improving the result of Alon, Krivelevich, and Sudakov~\cite{alon2007embedding} is a challenging open problem.
	
	\subsection{Other spanning structures}
	
	As mentioned above, embeddings of spanning structures in $\galpha$, $\gnp{n}{p}$, and $\galpha \cup \gnp{n}{p}$ for fixed $\alpha>0$ have also been studied for other graphs such as powers of Hamilton cycles, factors, and general bounded degree graphs.
	In most of these cases almost spanning embeddings (e.g.~Ferber, Luh, and Nguyen~\cite{ferber2017embedding}) can be generalised such that previous proofs can be extended to the regime $\alpha=o(1)$ with $\beta=\alpha^{-1/C}$, similar to what we do in Corollary~\ref{cor:trees}.
	Further improvements seem to be hard, because better almost spanning results are similar in difficulty to spanning results in $\gnp{n}{p}$ alone.
	We want to discuss this on one basic example, the triangle factor, which is the disjoint union of $n/3$ triangles.
	
	In $\galpha$ we need $\alpha \ge 2/3$, in $\gnp$ the threshold is $n^{-2/3}\log^{1/3}n $, and in $\galpha \cup \gnp$ with a fixed $\alpha>0$ it is $n^{-2/3}$.
	Note that the $\log$-term in $\gnp$ is needed to ensure that every vertex is contained in a triangle, which is essential for a triangle factor.
	Using Janson's inequality~\cite[Theorem~21.12]{frieze2016introduction} it is not hard to prove the almost spanning result for a triangle factor on at least $(1-\varepsilon)n$ vertices with $p=\omega(n^{-2/3})$. 
	This can be generalised to $\gnp{n}{\beta n^{-2/3}}$ giving a.a.s.~a triangle factor on at least $(1-C/\beta)n$ vertices.
	Again, this can only give something with $\beta=\alpha^{-1/C}$ in $\galpha \cup \gnp{n}{\beta n^{-2/3}}$ and to improve this we ask the following.
	\begin{question}
		Let $0<\beta=\beta(n)\le \log^{1/3} n$. Does $\gnp{n}{\beta n^{-2/3}}$ a.a.s.~contain a triangle factor on at least $$\left(1- (1-o(1))\exp(-\beta^3)\right)n$$ vertices?
	\end{question}
	Observe, that this is a.a.s.~the number of vertices of $\gnp{n}{\beta n^{-2/3}}$ that are not contained in a triangle.
	Similar questions for other factors or more general structures would be of interest.
	It took a long time until Johannson, Kahn, and Vu~\cite{johansson2008factors} determined the threshold for the triangle factor.
	This conjecture seems to be of similar difficulty, whereas for our purposes it would already be great to obtain a triangle factor on at least $(1-C \exp(-\beta^3))n$ vertices for some $C>1$.
	
	For the remainder of this note we prove Theorem~\ref{Thm_hamiltonicity} and~\ref{Thm_trees} in Section~\ref{Sec_hamil_proof} and~\ref{Sec_trees_proof} respectively.

	\section{Hamiltonicity}\label{Sec_hamil_proof}
	
	We will prove the following proposition that will be sufficient to prove the theorem together with known results on Hamilton cycles in $\gnp$.
	
	\begin{proposition}
	\label{Prop_hamiltonicity_large}
	Let $\alpha = \alpha(n) : \mathbb{N} \mapsto (0,1)$ such that $\alpha = \omega(n^{-1/6})$, and let $\beta = \beta(\alpha) = -(6+o(1)) \log(\alpha)$. 
	Then a.a.s. $\galpha \cup \gnp{n}{\beta /n}$ is Hamiltonian.
	\end{proposition}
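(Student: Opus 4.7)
The plan is to first use $\gnp{n}{\beta/n}$ to build an almost-spanning cycle via Lemma~\ref{Lemma_almost_hamiltonian}, and then absorb the sublinear leftover using a simple switching that exploits $\galpha$. The factor $6$ is chosen so that $\beta\leq\log n$ (required by Lemma~\ref{Lemma_almost_hamiltonian}) and, at the same time, so that the resulting leftover is much smaller than the switching structures defined below.

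Applying Lemma~\ref{Lemma_almost_hamiltonian} gives a cycle $C_0$ with leftover $V' = V\setminus V(C_0)$ of size $|V'|\leq (1+o(1))\beta\exp(-\beta)\, n = o(\alpha^2 n)$. For each $v\in V'$ set
\[
B(v) = \cbc{\{x,y\}\in E(C_0) : vx,\,vy\in E(\galpha)},
\]
so that any $\{x,y\}\in B(v)$ lets us enlarge the cycle by $v$: delete $\{x,y\}$ and add the two $\galpha$-edges $\{v,x\}$ and $\{v,y\}$.

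The key technical step is to show that a.a.s.~$|B(v)| \geq \alpha^2 n/3$ for every $v\in V'$. Since $\galpha$ is independent of $\gnp{n}{\beta/n}$, and Frieze's construction can be realized permutation-equivariantly (e.g.\ by first applying a uniformly random relabeling of the vertices), conditional on its length the cycle $C_0$ is distributed as a uniformly random cyclic arrangement of a uniformly random vertex subset. Hence the expected number of cycle edges with both endpoints in the deterministic set $N_\galpha(v)$, which has size at least $\alpha n$, equals $(1+o(1))\alpha^2|C_0|\sim \alpha^2 n$. A bounded-differences estimate (McDiarmid on the random labeling) then gives $\pr\bc{|B(v)|<\alpha^2 n/3}\leq \exp(-\Omega(\alpha^4 n))$, and the assumption $\alpha=\omega(n^{-1/6})$ yields $\alpha^4 n=\omega(\log n)$, which is more than enough for a union bound over all $v\in V'$.

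We conclude by greedy absorption: process $V'$ in arbitrary order, and for each $v$ pick any $\{x,y\}\in B(v)$ whose cycle edge still lies in the current cycle and apply the switch. Each switch invalidates at most $O(1)$ elements of any other $B(v')$, so after all $|V'|=o(\alpha^2 n)$ switches the relevant switching set retains size at least $\alpha^2 n/3 - o(\alpha^2 n) > 0$, and the step never fails. The main obstacle is the concentration statement in the third paragraph: the cycle $C_0$ is an intricate function of the random graph, and only through the exchangeability of $\gnp$, together with its independence from $\galpha$, can the distribution of $|B(v)|$ be reduced to a quantity amenable to standard concentration tools.
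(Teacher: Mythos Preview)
Your argument is correct and, in fact, streamlines the paper's proof in two ways. The paper works with a long \emph{path} extracted from Lemma~\ref{Lemma_almost_hamiltonian}, uses a three-edge absorber $\vB(u,v)=\{x\in N_{\galpha}(u)\cap P': N_P(x)\subseteq N_{\galpha}(v)\}$ of size $\Theta(\alpha^3 n)$ to swap a leftover vertex into the interior while pushing the displaced vertex to the end of the path, and finally closes the Hamilton cycle via a reserved sprinkle $\gnp{n}{1/n}$ that supplies an edge between two such absorbing sets. You instead keep the cycle throughout and use the two-edge absorber $B(v)=\{\{x,y\}\in E(C_0): x,y\in N_{\galpha}(v)\}$ of size $\Theta(\alpha^2 n)$; since each switch preserves the cycle, no closing step (and hence no second exposure) is needed. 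What the paper's route buys is modularity: the path/leaf-appending absorber is exactly what is reused verbatim in Section~\ref{Sec_trees_proof} for bounded-degree trees, where a cycle-based switch is unavailable. Your concentration step is also more honest than the paper's: the claim there that $|\vB(u,v)|\sim\Bin(|P'|,\alpha^3)$ is only heuristically true, whereas your appeal to permutation-equivariance of $\gnp$ plus McDiarmid on the random labelling gives the genuine $\exp(-\Omega(\alpha^4 n))$ bound. One small remark: your union bound should be taken over all $v\in V$, not just $v\in V'$, since $V'$ is itself random; your tail bound easily accommodates this.
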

	
	\subsection*{Proof of Theorem~\ref{Thm_hamiltonicity}}
	Let $\alpha, \beta>0$ such that $\beta = -(6+o(1)) \log(\alpha)$.
	If $\alpha = O(n^{-1/6})$, we have $\beta \ge (1+o(1)) \log n$ and we can infer that a.a.s.~there is a Hamilton cycle in $G(n,\beta/n)$ (this follows from an improvement on the result concerning the threshold for Hamiltonicity~\cite{komlos1983limit}).
	On the other hand, if $\alpha = \omega(n^{-1/6})$, then we apply Proposition~\ref{Prop_hamiltonicity_large} to a.a.s.~get the Hamilton cycle.
    \qed

\subsection*{Proof of \Prop~\ref{Prop_hamiltonicity_large}}

    To prove the proposition we apply the following strategy.
	We first find a long path in $\gnp$ alone.
	Then, by considering the union with $\galpha$, we obtain a reservoir structure for each vertex that allows us to extend the length of the path iteratively.
	Finally, we will also be able to close this path to a cycle on all vertices.
	W.l.o.g.~we can assume that $\alpha<1/10$.

	\subsection*{Finding a long path} 
	
	Let $P=p_1, \dots, p_\ell$ be the longest path that we  can find in $\cG_1=\gnp{n}{(\beta-1)/n}$ and let $V' = \cbc{ v_1,\dots,v_k } =  V(\cG_1) \setminus \cbc{ p_1, ..., p_\ell }$ be the left-over.
	Then, by Lemma~\ref{Lemma_almost_hamiltonian}, we get a.a.s.~that
	\begin{align}
	k=\abs{V'}=n-\ell \le \bc{1-o(1)} \beta \exp\bc{1-\beta}n. \label{Eq_SizeLeftover}
	\end{align}
	Next, let $P'$ be a collection of vertices of $P$, where we take every other vertex, excluding the last, that is 
	\begin{align}
	\label{Def_Pprime} P' = \cbc{ p_i : i \equiv 0 \pmod 2 } \setminus \cbc{ p_\ell}
	\end{align}
	In the following, we will work on $P'$ instead of all of $P$, ensuring that certain absorbing structures do not overlap.
	
	\subsection*{Absorbing the left-over} 
	We now consider the union $\galpha \cup \cG_1$.
	The following absorbing structure is the key to the argument.
	\begin{definition}
	    \label{Def_absorber}
		For any vertices $u, v \in V(\galpha \cup \cG_1)$ let 
		\begin{align}
		\label{Def_Buv} \Buv{u}{v} = \cbc{ x \in N_{\galpha}(u) \cap P' \mid N_P(x) \subseteq N_{\galpha}(v)  }.
		\end{align}
	\end{definition}
	
	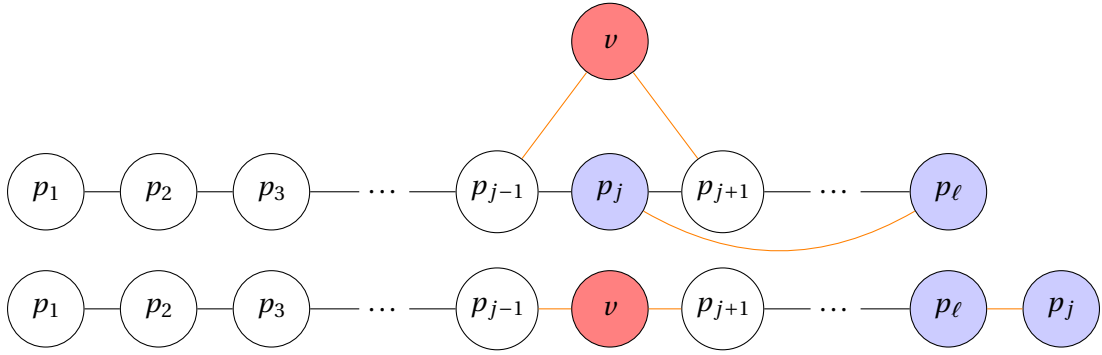
\begin{figure}
	\captionsetup{margin=0.5cm}
		\centering
		\begin{tikzpicture}[scale=1]
		\node[circle, fill=none, draw, minimum size=3em] (1) at (0,0) {$p_1$};
		\node[circle, fill=none, draw, minimum size=3em] (2) at (1.5, 0)  {$p_2$};
		\node[circle, fill=none, draw, minimum size=3em] (3) at (3, 0) {$p_3$};
		\node (4) at (4.5, 0) {\dots};
		\node[circle, fill=none, draw, minimum size=3em] (5) at (6,0) {$p_{j-1}$};
		\node[circle, fill=blue!20, draw, minimum size=3em] (6) at (7.5, 0)  {$p_j$};
		\node[circle, fill=none, draw, minimum size=3em] (7) at (9, 0) {$p_{j+1}$};
		\node (8) at (10.5, 0) {\dots};
		\node[circle, fill=blue!20, minimum size=3em, draw] (9) at (12, 0)  {$p_\ell$};
		\node[circle, fill=red!50, draw, minimum size=3em, draw] (10) at (7.5,2) {$v$};
		
		\draw (1) -- (2) -- (3) -- (4) -- (5) -- (6) -- (7) -- (8) -- (9);
		\path[-] (10)  edge   [orange]  (5);
		\path[-] (10)  edge   [orange]  (7);
		\path[-] (9)  edge   [orange, bend left=30]  (6);

\node (anchor) at (14.1,0) {};
		\end{tikzpicture}
		
		\begin{tikzpicture}[scale=1]
		\node[circle, fill=none, draw, minimum size=3em] (1) at (0,0) {$p_1$};
		\node[circle, fill=none, draw, minimum size=3em] (2) at (1.5, 0)  {$p_2$};
		\node[circle, fill=none, draw, minimum size=3em] (3) at (3, 0) {$p_3$};
		\node (4) at (4.5, 0) {\dots};
		\node[circle, fill=none, draw, minimum size=3em] (5) at (6,0) {$p_{j-1}$};
		\node[circle, fill=blue!20, draw, minimum size=3em] (6) at (13.5, 0)  {$p_j$};
		\node[circle, fill=none, draw, minimum size=3em] (7) at (9, 0) {$p_{j+1}$};
		\node (8) at (10.5, 0) {\dots};
		\node[circle, fill=blue!20, minimum size=3em, draw] (9) at (12, 0)  {$p_\ell$};
		\node[circle, fill=red!50, draw, minimum size=3em, draw] (10) at (7.5,0) {$v$};
		
		\draw (1) -- (2) -- (3) -- (4) -- (5);
		\draw (7) -- (8) -- (9);
		\path[-] (10)  edge   [orange]  (5);
		\path[-] (10)  edge   [orange]  (7);
		\path[-] (9)  edge   [orange]  (6);

        \node (anchor) at (14.1,0) {};
		\end{tikzpicture}
		
		\caption{The top shows a path $P = p_1, \dots, p_\ell$ and the left-over vertex $v$. Black edges belong to the random graph, orange edges can be found in $\galpha$. The bottom shows the situation after absorbing $v$ using that $p_j \in \Buv{p_\ell}{v}$. }
		\label{Fig_AbsorberBuv}
	\end{figure}
	
	If for some $v \in V'$ there is an $p_j \in \Buv{p_\ell}{v}$ we can proceed as follows (see Figure \ref{Fig_AbsorberBuv}).
	By definition we have $p_{j-1}, p_{j+1} \in N_{\galpha(v)}$ and $p_{j} \in N_{\galpha}(p_\ell) \cap P$.
	Then $p_j$ can be replaced by $v$ in the path $P$ and can now be appended to the path $P$ at $p_\ell$.
	So we get the path $\tilde{P} = p_1, \dots, p_{j-1},v, p_{j+1}, \dots, p_\ell, p_j$, where $\tilde{P} \subset P \cup \galpha$. 

    To iterate this argument we show that a.a.s.~for any pair of vertices $u$ and $v$, the set $\Buv{u}{v}$ is large enough.
	
	\begin{claim}\label{Lemma_SizeOfBuv}
		We have a.a.s.~$\abs{\Buv{u}{v}} \geq \alpha^3 n / 4$ for any $u,v \in V(\galpha \cup \cG_1)$.
	\end{claim}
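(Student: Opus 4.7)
Fix $u, v \in V$, and set $U := N_{\galpha}(u)$ and $W := N_{\galpha}(v)$; both are deterministic sets of size at least $\alpha n$. My plan is to first compute $\Erw[\abs{\Buv{u}{v}}]$ by exploiting the vertex-exchangeability of $\cG_1$, and then to upgrade this expectation to an a.a.s.~lower bound uniform in $(u,v)$ via a concentration estimate and a union bound over the $\binom{n}{2}$ pairs.

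For the expectation, the distribution of $\cG_1 = \gnp{n}{(\beta-1)/n}$ is invariant under permutations of $V$. Choosing the longest path $P$ by a permutation-equivariant tie-breaking rule (say, uniformly at random among all longest paths), for every even $i$ with $2 \le i \le \ell-1$ and every ordered triple $(a,x,b)$ of distinct vertices of $V$, the probability $\Pr(p_{i-1} = a,\,p_i = x,\,p_{i+1} = b)$ takes a common value $c_i$. A double-counting identifies $c_i = \Pr(\ell > i)/[n(n-1)(n-2)]$. Summing over valid ordered triples (those with $x \in U$, $a,b \in W$, all distinct) and over even indices $i$, and invoking Lemma~\ref{Lemma_almost_hamiltonian} to get $\Erw[\ell] \ge (1-o(1))n$ since $\beta \to \infty$, yields
\begin{align*}
\Erw[\abs{\Buv{u}{v}}] \ge (1-o(1)) \cdot \frac{\abs{U}\abs{W}(\abs{W}-1)}{n(n-1)(n-2)} \cdot \frac{\Erw[\ell]}{2} \ge \bc{\tfrac12 - o(1)} \alpha^3 n.
\end{align*}

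For the concentration step, I need $\Pr(\abs{\Buv{u}{v}} < \alpha^3 n/4) = o(n^{-2})$, since the assumption $\alpha = \omega(n^{-1/6})$ gives $\alpha^3 n = \omega(n^{1/2})$, so sub-Gaussian tails will suffice for the union bound. A direct edge-exposure martingale is inadequate, because a single edge flip can re-route the longest path through many vertices. Instead, I plan to use a two-round sprinkling: decompose $\cG_1 = \hat\cG \cup \tilde\cG$ into independent binomial random graphs with $\hat p \approx (\beta-2)/n$ and $\tilde p \approx 1/n$, use Lemma~\ref{Lemma_almost_hamiltonian} inside $\hat\cG$ to fix a long path, and then use the independent edges of $\tilde\cG$ to introduce enough local randomness to obtain Chernoff-type concentration for the absorber count.

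The main obstacle will be precisely this concentration step, owing to the global sensitivity of the longest path to single edge changes. As a backup, I might pursue a second-moment argument, again exploiting vertex-exchangeability to evaluate $\Erw[\abs{\Buv{u}{v}}^2]$ as a weighted sum over pairs of ordered triples and verifying that the off-diagonal covariances essentially factor.
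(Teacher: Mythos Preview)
Your expectation computation via vertex-exchangeability is correct and is precisely the paper's argument. The gap is in the concentration step. Your sprinkling plan does not work as written: once the long path $P$ is fixed inside $\hat\cG$, the set $\Buv{u}{v}$ is completely determined by $P$ and $\galpha$, so the extra edges of $\tilde\cG$ introduce no randomness into $|\Buv{u}{v}|$ whatsoever. The second-moment backup only yields an $o(1)$ failure probability via Chebyshev, not the $o(n^{-2})$ you need to survive the union bound over all $\binom{n}{2}$ pairs.

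The missing idea is that the \emph{same} exchangeability you exploited for the expectation also delivers the concentration, and this is what the paper does. Condition on the isomorphism type of $\cG_1$ (so in particular $\ell$ is fixed and a.a.s.\ $\ell\ge(1-o(1))n$); then the embedding of the longest path into $V$ is a uniformly random injection of $[\ell]$ into $V$, and $|\Buv{u}{v}|$ becomes a permutation statistic: a sum over even $j$ of the indicator that $(p_{j-1},p_j,p_{j+1})$ lands in $W\times U\times W$. After thinning $P'$ further so that the relevant position-triples are pairwise disjoint (e.g.\ restrict to $j\equiv 2\pmod 4$), this is a sum of indicators under sampling without replacement, which is at least as concentrated as its with-replacement (i.i.d.) analogue. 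Chernoff then gives $\Pr(|\Buv{u}{v}| < \alpha^3 n/4) \le \exp(-\Omega(\alpha^3 n)) = \exp(-\omega(\sqrt n))$ since $\alpha=\omega(n^{-1/6})$, and the union bound over pairs goes through. This is exactly the content of the paper's (tersely phrased) assertion that $|\Buv{u}{v}|$ behaves like $\Bin(|P'|,\alpha^3)$.
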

	
	\begin{proof}
		Let $u, v$ be arbitrary vertices in $V=V(\galpha \cup \cG_1)$. The set $\Buv{u}{v}$ is uniformly distributed over $P'$, because $\gnp{n}{(\beta-1)/n}$ is sampled independently of the deterministic graph $\galpha$. Then by definition 
		\begin{align}
		\Erw \brk{\abs{\Buv{u}{v}}} \ge \frac{9}{10} \alpha^3 \abs{P'} \geq \frac{2}{5} \alpha^3 \bc{1 - \bc{1-o(1)} \beta \exp\bc{1-\beta}} n \geq \alpha^3 n / 3. \label{Eq_ErwBuv}
		\end{align}
		An immediate consequence of $\Buv{u}{v}$ being uniformly settled over $\gnp{n}{(\beta-1)/n}$ is that $\abs{\Buv{u}{v}} \sim \Bin(\abs{P'}, \alpha^3)$. It follows from \eqref{Eq_ErwBuv} and the Chernoff bound that there is a sufficiently small, but constant, $\delta > 0$ s.t.
		\begin{align}
		\Pr \bc{\abs{\Buv{u}{v}} < \alpha^3 n/4 } \leq \Pr \bc{\abs{\Buv{u}{v}} < (1- \delta)\Erw \brk{\abs{\Buv{u}{v}}} } \leq \exp \bc{ - \delta^2 / 8 \alpha^3 n } < \exp \bc{- \sqrt{n} }. \label{Eq_ConcBuv}
		\end{align}
	 The lemma follows from a union bound over all $\binom{n}{2}$ choices for $u,v$ and \eqref{Eq_ConcBuv}.
	\end{proof}
	
	We now have everything at hand to absorb all but two of the left-over vertices $v \in V'$ onto a path of length $n-2$. We do this inductively using Algorithm \ref{Algo_IncreaseP}.
	
	\IncMargin{1em}
	\begin{algorithm}
		\SetAlgoLined
		\SetKwData{Left}{left}\SetKwData{This}{this}\SetKwData{Up}{up}
		\SetKwFunction{Union}{Union}\SetKwFunction{FindCompress}{FindCompress}
		\SetKwInOut{Input}{Input}\SetKwInOut{Output}{Output}
		\Input{Path $P = p_1 \dots p_\ell$, set of left-over vertices $V' = \cbc{v_1, \dots, v_k}$.}
		\Output{Path $\tilde{P}$ in $P \cup \galpha$ on $n-2$ vertices.}
		\BlankLine
		Define $\ell_1 = \ell$, $P_1 = P$ with $P_1 = u^1_1 \dots  u^1_{\ell_1}$\; 
		Define for any $u, v$ the set $B_1(u,v)=\Buv{u}{v}$\;
		Define $V_1' = V'$\;
		\For{$i = 1$ \KwTo $k-2$}{
			Choose $u^i_j \in B_i(u_{\ell_i}^i, v_i)$ and absorb $v_i$ onto $P_i$\;
			Denote by $P_{i+1} = u^i_1 \dots u^i_{j-1} v_i u^i_{j+1} \cdots u^i_{\ell_i} u^i_j = u^{i+1}_1 \dots u^{i+1}_{\ell_{i}+1}$ the resulting path\;
			Update $\ell_{i+1} = \ell_i + 1, V_{i+1}' = V_i' \setminus \cbc{v_i}$\;
			Set $B_{i+1}(u, v) = B_{i}(u, v) \setminus \cbc{ u^i_{j}}$ for any $u, v$\;
		}
		$\tilde{P} = P_{k}$\;
		\caption{Absorbs all but two vertices of the left-over set $V'$ onto a path.} 
		\label{Algo_IncreaseP}
	\end{algorithm}\DecMargin{1em}
	
Let $\tilde{P}, B_i(\cdot, \cdot)$ be defined as in Algorithm \ref{Algo_IncreaseP}. In order to see that the algorithm terminates with $\tilde{P} = P_{k}$ it suffices to prove, that $B_i(u,v)$ is not empty for any $u,v \in V$ and $i=1\dots k$.
    By definition of $P'$ in~\eqref{Def_Pprime} we have $|\Buv{u}{v} \setminus B_i(u,v)| \le i$ and using
	Claim~\ref{Lemma_SizeOfBuv} and \eqref{Eq_SizeLeftover} we get
	\begin{align}
	   \label{Eq_LeftOverVertices}
	    |B_i(u,v)| \ge \alpha^3 n/8,
	\end{align}
	whenever $\beta \exp \bc{1- \beta} < \alpha^3 / 8$.
	As this holds by definition of $\beta = -(6+o(1)) \log(\alpha)$ and with $\alpha < 1/10$, we get that~\eqref{Eq_LeftOverVertices} holds for all $u,v \in V$ and any $i=1,\dots,k$.
	
\subsection*{Closing the cycle}

	We have found a path $\tilde{P} = p_1, \dots, p_{n-2}$ and we are left with two vertices $v_{k-1}, v_{k}$ that are not on the path.
	It is possible to close the Hamilton cycle by absorbing $v_{k-1}$ and $v_{k}$ if there is an edge between $A := B_k(p_1,v_{k-1})$ and $B := B_k(p_{n-2},v_k)$.
	Indeed, we then have w.l.o.g.~$i<j$ such that $p_i \in A$, $p_j \in B$, and there is an edge $p_ip_j$.
	By definition of $A$ and $B$ we can then obtain the Hamilton cycle
	$$p_i,p_1,\dots,p_{i-1},v_{k-1},p_{i+1},\dots,p_{j-1},v_k,p_{j+1},\dots,p_{n-2},p_j.$$
	
	It remains to prove that we have an edge between $A$ and $B$.
	For this we reveal $\cG_2 = \gnp{n}{1/n}$.
	As $|A|,|B| \ge \alpha^3 n/8$ by~\eqref{Eq_LeftOverVertices} we get
	\begin{align}
		\Erw \brk{ e_{\cG_2} \bc{A, B} } \geq \frac{1}{n} \cdot \bc{\frac{\alpha^3 n}{16}}^2 = \omega(1),
	\end{align}
	as $\alpha = \omega (n^{-1/6})$.
	Together with Chernoff's inequality this implies that a.a.s~$e_{\cG_2} \bc{A, B}>0$.
	As the union of $\cG_1$ and $\cG_2$ can be coupled as a subgraph of $\gnp{n}{\beta/n}$ this implies that a.a.s.~there is a Hamilton cycle in $\galpha \cup \gnp$ and finishes the proof of \Prop~\ref{Prop_hamiltonicity_large}. \qed

Observe, that when running the same proof for Theorem~\ref{Thm_PM} we can obtain the better constant by adapting the definition of the $\Buv{u}{v}$ to the setup of perfect matchings and then proving that a.a.s.~$|\Buv{u}{v}| \ge \alpha^2n/4$.
We spare the details here.

\section{Bounded degree trees}
\label{Sec_trees_proof}

\Thm~\ref{Thm_trees} is modular, which turns almost spanning embeddings in the random graph into spanning embeddings in the union $\galpha \cup \gnp{n}{\beta/n}$.
The proof is very similar to the proof for Hamilton cycles and we will spare some details.

\subsection*{Proof of \Thm~\ref{Thm_trees}}
Let $\galpha$ be given and $\cG = \gnp{n}{\beta/n}$.
Let $\cT$ be an arbitrary tree on $n$ vertices with maximum degree $\Delta$.
Denote by $\cT_{\eps}$ the tree obtained from $\cT$ by the following construction.
\begin{enumerate}
    \item Set $\cT_0 = \cT$.
    \item In every step $i$, check whether $\cT_i$ has at most $(1 - \eps)n$ vertices. 
    \begin{itemize}
        \item If this is the case, set $\cT_\eps = \cT_i$ and finish the process.
        \item Otherwise, create $\cT_{i+1}$ by deleting one leaf of $\cT_i$. 
    \end{itemize}
\end{enumerate}
We denote by $L$ the left-over, that are the vertices removed during construction of $\cT_{\eps}$.
Then
\begin{align*}
\abs{V(\cT_{\eps})} \leq (1 - \eps)n, \qquad \abs{L} \leq \eps n+1, \qquad \text{and} \qquad V(\cT) = V(\cT_{\eps}) \cup L.
\end{align*}
Next we let $T$ be an independent subset of the vertices of $\cT_\eps$ such that the vertices in $T$ do not have neighbours outside of $\cT_\eps$ with respect to $\cT$.
Observe, that there exists such a $T$ such that $|T| \ge \frac{(1-\Delta \eps)n}{\Delta+1}$.

By assumption we a.a.s.~have an embedding $\cT_\eps'$ of $\cT_\eps$ into $\cG$ and we denote by $T'$ the image of $T$ under this embedding.
We adapt Definition~\ref{Def_absorber} and define for any two vertices $u,v$
$$\vB(u,v) = \cbc{ x \in N_{\galpha}(u) \cap T' \mid N_{\cT_\eps'}(x) \subset N_{\galpha (v)}}.$$
As before, if we want to embed a vertex $w$ that is a neighbour of an already embedded vertex $u$ in $\cT_\eps$ and $v$ is an available vertex we can do it if $\vB(u,v)$ is non-empty.
More precisely, with $x \in \vB(u,v)$, we can embed the vertex embedded onto $x$ to $v$, embed $w$ to $x$, and obtain a valid embedding of $\cT_\eps$ with an additional neighbour of $u$.
Analogous to Claim~\ref{Lemma_SizeOfBuv} we get the following.

\begin{claim}
\label{Claim_Buv}
We have a.a.s.~$\abs{\Buv{u}{v}} \geq \frac{\alpha^{\Delta+1} n}{4(\Delta+1)}$ for any $u,v \in V(\galpha \cup \cG)$.
\end{claim}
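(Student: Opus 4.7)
Following the blueprint of Claim~\ref{Lemma_SizeOfBuv}, I would prove the claim by first lower-bounding the expectation of $\abs{\Buv{u}{v}}$ and then applying a concentration inequality. After revealing $\cG$ (and hence the embedding $\cT_\eps'$ together with $T'$), and using that $\cG$ is sampled independently of $\galpha$, the exchangeability of $\gnp{n}{\beta/n}$ under vertex permutations lets one regard the labelling of the abstract tree $\cT_\eps$ as a uniformly random injection $\pi\colon V(\cT_\eps)\to V$. For each $x\in T$ the $\cT_\eps$-neighbourhood $\Gamma(x)$ has size at most $\Delta$ and is disjoint from $T$, because $T$ is independent in $\cT_\eps$ and its vertices have no $\cT$-neighbours in $L$.

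To bound the expectation, fix $x\in T$: the event $\pi(x)\in\Buv{u}{v}$ requires $\pi(x)\in N_\galpha(u)$ and $\pi(y)\in N_\galpha(v)$ for every $y\in\Gamma(x)$, i.e.~at most $\Delta+1$ distinct images must land in target sets of size at least $\alpha n$. A standard injective-counting argument gives that this probability is at least $(1-o(1))\alpha^{\Delta+1}$, and summing over $\abs{T}\ge(1-\Delta\eps)n/(\Delta+1)$ together with the hypothesis $4(\Delta+1)\eps<\alpha^{\Delta+1}$ produces
\begin{equation*}
\Erw\brk{\abs{\Buv{u}{v}}}\ge (1-o(1))\alpha^{\Delta+1}\cdot\frac{(1-\Delta\eps)n}{\Delta+1}\ge \frac{\alpha^{\Delta+1}n}{2(\Delta+1)}.
\end{equation*}

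For concentration I would view $\abs{\Buv{u}{v}}$ as a function of $\pi$ and observe that any transposition alters it by at most $2(\Delta+1)$, since each of the two touched abstract vertices contributes through its own summand and through the at most $\Delta$ summands indexed by its $T$-neighbours in $\cT_\eps$. A McDiarmid-type bounded-differences inequality for random permutations then yields
\begin{equation*}
\Pr\bc{\abs{\Buv{u}{v}}<\tfrac{\alpha^{\Delta+1}n}{4(\Delta+1)}}\le \exp\bc{-\Omega\bc{\alpha^{2(\Delta+1)}n/\Delta^{2}}}.
\end{equation*}
This beats the union bound over the $\binom{n}{2}$ pairs $(u,v)$ in the parameter range relevant to Corollary~\ref{cor:trees}; for $\alpha$ so small that the exponent no longer dominates $\log n$, one falls back on a separate branch where $\beta\ge\log n$ and $\gnp{n}{\beta/n}$ already contains $\cT$ on its own, mirroring the $\alpha=O(n^{-1/6})$ split used in the proof of Theorem~\ref{Thm_hamiltonicity}.

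The main obstacle is the concentration step: the indicators $\ind[x\in\Buv{u}{v}]$ for $x\in T$ share the common random embedding $\pi$ and are not truly independent, so treating $\abs{\Buv{u}{v}}$ as $\Bin(\abs{T},\alpha^{\Delta+1})$ (as the analogous proof of Claim~\ref{Lemma_SizeOfBuv} implicitly does) is only an approximation. A rigorous argument goes through McDiarmid on random permutations with Lipschitz constant $O(\Delta)$, or equivalently a negative-association reduction to the binomial tail, combined with the case split above to cover very sparse~$\galpha$.
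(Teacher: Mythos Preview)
Your approach mirrors the paper's: it does not give a standalone proof of Claim~\ref{Claim_Buv} but simply declares it ``analogous to Claim~\ref{Lemma_SizeOfBuv}'', whose proof computes the expectation via the vertex-exchangeability of $\gnp{n}{\beta/n}$, asserts a binomial law, applies Chernoff, and finishes with a union bound over pairs $(u,v)$. Your expectation computation and union-bound structure are identical, and your lower bound $\Erw\brk{\abs{\Buv{u}{v}}}\ge \alpha^{\Delta+1}n/(2(\Delta+1))$ is the direct analogue of~\eqref{Eq_ErwBuv}.

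Where you go beyond the paper is in the concentration step. You correctly point out that the indicators $\ind[\pi(x)\in\Buv{u}{v}]$ for $x\in T$ are coupled through the common random injection $\pi$, so the binomial model the paper invokes in Claim~\ref{Lemma_SizeOfBuv} (and, by analogy, here) is only heuristic. Your replacement---McDiarmid for random permutations with Lipschitz constant $O(\Delta)$---is a legitimate way to make this rigorous, and the resulting tail $\exp(-\Omega(\alpha^{2(\Delta+1)}n/\mathrm{poly}(\Delta)))$ beats the $\binom{n}{2}$ union bound exactly when $\alpha^{2(\Delta+1)}n=\omega(\log n)$. You also notice what the paper leaves implicit: Theorem~\ref{Thm_trees} carries no explicit lower bound on $\alpha$, so for very small $\alpha$ one must appeal to a separate branch (as the paper does in the Hamiltonicity proof via the split $\alpha=\omega(n^{-1/6})$ versus $\alpha=O(n^{-1/6})$). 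In the actual application, Corollary~\ref{cor:trees}, the choice $\beta=C\alpha^{-(\Delta+1)}\log(1/\alpha)$ guarantees that once $\alpha$ is small enough to threaten the union bound, $\beta$ exceeds $\log n$ and $\gnp{n}{\beta/n}$ already contains $\cT$ by Montgomery's theorem; your case split is therefore both necessary and sufficient to close the argument in the intended regime.
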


Therefore, similar to Algorithm~\ref{Algo_IncreaseP}, we can iteratively append leaves to $\cT_\eps$ to obtain an embedding of $\cT$ into $\galpha \cup \cG$.
As in every step we lose at most one vertex from each $\vB(u,v)$ this works as long as
$$\abs{L} \leq \eps n+1 < \abs{\vB(u,v)},$$
which holds by Claim~\ref{Claim_Buv} and the assumption on $\eps$ and $\alpha$.
\qed

\printbibliography
\end{document}